\title[Chance-constrained quasi-convex optimization]{Chance-constrained quasi-convex optimization with application to data-driven switched systems control}
\author{%
\Name{Guillaume O. Berger} \Email{guillaume.berger@uclouvain.be}\\
\Name{Rapha\"el M. Jungers} \Email{raphael.jungers@uclouvain.be}\\
\Name{Zheming Wang} \Email{zheming.wang@uclouvain.be}\\
\addr Institute of Information and Communication Technologies, Electronics and Applied Mathematics, Department of Mathematical Engineering (ICTEAM/INMA) at UCLouvain, 1348 Louvain-la-Neuve, Belgium}
\renewcommand{\Re}{\mathbb{R}}
\newcommand{\NNb}{\mathbb{N}}
\newcommand{\SSb}{\mathbb{S}}
\newcommand{\diff}{\mathrm{d}}
\newcommand{\conv}{\mathrm{conv}}
\newcommand{\cone}{\mathrm{cone}}
\newcommand{\Prob}{\mathbb{P}}
\newcommand{\nn}{{n\times n}}
\newcommand{\lambdamin}{\lambda_{\mathrm{min}}}
\newcommand{\calA}{\mathcal{A}}
\newcommand{\calC}{\mathcal{C}}
\newcommand{\calP}{\mathcal{P}}
\newcommand{\calV}{\mathcal{V}}
\newcommand{\calX}{\mathcal{X}}
\newcommand{\Opt}{\mathrm{Opt}}
\newcommand{\Cost}{\mathrm{Cost}}
\newcommand{\Sub}{\mathrm{Sub}}
\newcommand{\Nor}{\mathrm{Nor}}
\newcommand{\ProbJSR}{\calP_{\mathrm{jsr}}}
\newcommand{\CostJSR}{\Cost_{\mathrm{jsr}}}
\newcommand{\OptJSR}{\Opt_{\mathrm{jsr}}}
\newcommand{\ViolJSR}{V_{\mathrm{jsr}}}
\newcommand{\omegab}{\bar{\omega}}
\newcommand{\uh}{\mathbf{1}}
\newcommand{\proofname}{Proof}
\newenvironment{proofbis}{\par\noindent{\bfseries\upshape \proofname\ }}{\jmlrQED}
\newtheorem{assumption}[theorem]{Assumption}
\def\set@curr@file#1{\def\@curr@file{#1}} 
\begin{document}

\maketitle

\begin{abstract}
We study quasi-convex optimization problems, where only a subset of the constraints can be sampled, and yet one would like a probabilistic guarantee on the obtained solution with respect to the initial (unknown) optimization problem. Even though our results are partly applicable to general quasi-convex problems, in this work we introduce and study a particular subclass, which we call ``quasi-linear problems''.
We provide optimality conditions for these problems.
Thriving on this, we extend the approach of chance-constrained convex optimization to quasi-linear optimization problems.
Finally, we show that this approach is useful for the stability analysis of black-box switched linear systems, from a finite set of sampled trajectories.
It allows us to compute probabilistic upper bounds on the JSR of a large class of switched linear systems.
\end{abstract}


\begin{keywords}
Data-driven control, chance-constrained optimization, quasi-convex programming, switched systems.
\end{keywords}

\section{Introduction}

Data-driven control has gained a lot of interest from the control community in recent years; see, e.g., \citet{duggirala2013verification,huang2014proofs,blanchini2016modelfree,kozarev2016case,balkan2017underminer,boczar2018finitedata}.
In many modern applications of control systems, one cannot rely on having a model of the system, but rather has to design a controller in a \emph{blackbox}, \emph{data-driven} fashion.
This is the case for instance for \emph{proprietary} systems; more usually, this happens because the system is too complex to be modeled, or because the obtained model is too complicated to be analyzed with classical control techniques.
In these situations, the control engineer can only rely on data --- which sometimes come in huge amounts ---, but make the problem of very different nature than the classical, model-based control problems.
Examples of such situations include self-driving cars, where the input to the controller is made of huge heterogeneous data (harvested from cameras, lidars, etc.); or smart grid applications, where the heterogeneous parts of the system (prosumers, smart buildings, etc.) are best described with data harvested from observing these parts than with a rigid, closed-form model \citep{aswani2012identifying,zhou2017quantitative}.

Data collected from a control system can be seen as \emph{samples} extracted from a large set of possible behaviors.
Controller design can then be approached by synthesizing controllers based on the sampled set of behaviors; the challenge is then to provide guarantees on the correctness of the controller for the whole behavior of the system.
In optimization, this approach is known as \emph{chance-constrained optimization}, which consists in sampling a \emph{subset} of the constraints of an optimization problem and solving the problem with these constraints only.
The solution obtained in this way will in general not satisfy \emph{all of the constraints} of the problem; however, probabilistic guarantees can be obtained on the measure of the set of constraints that are compatible with this solution \citep{calafiore2010random,margellos2014ontheroad,campi2018ageneral}.

The approach of chance-constrained optimization has already proved useful in several areas of control, like robust control design \citep{calafiore2006thescenario} or quantized control \citep{campi2018ageneral}.
Recently, it has been successfully applied to data-driven control problems, as a technique to bridge the gap between data and model-based control; see, e.g., applications in data-enabled predictive control \citep{van2015distributionally,coulson2020distributionally} and stability analysis of black-box dynamical systems \citep{kenanian2019data,wang2020adatadriven}.

In this work, we introduce a new class of optimization problems: \emph{quasi-linear problems}.
This class forms a subclass of quasi-convex optimization problems \citep[see, e.g.,][]{eppstein2005quasiconvex}.
We extend the results from \citet{calafiore2010random} for chance-constrained optimization of convex problems to quasi-linear optimization problems.
This is achieved by showing that for any such optimization problem there is a subset of constraints, called the \emph{essential set}, with bounded cardinality, that provides the same optimal solution as the original problem.
This result draws on an akin result for quasi-convex problems in \citet{eppstein2005quasiconvex}, and improves it in two ways: we get a better upper bound on the cardinality of essential sets, while removing the assumption that the constraints are ``continuously shrinking'' \citep{eppstein2005quasiconvex}.

We believe that chance-constrained quasi-linear optimization can find application in many areas of data-driven control.
For instance, by replacing LMIs with sampled linear inequalities, one could transform SDP problems in control \citep{boyd1994linear} into linear or quasi-linear programs, and use chance-constrained optimization to bridge the gap between the original and the sampled formulations.

As a proof of concept, we demonstrate here that the setting of chance-constrained quasi-linear optimization can be useful for the stability analysis of \emph{black-box switched linear systems}.
Switched Linear Systems are systems described by a finite set of linear modes among which the system can switch over time.
They constitute a paradigmatic class of hybrid and cyber-physical systems, and appear naturally in many engineering applications, or as abstractions of more complicated systems \citep{alur2009modeling,jadbabaie2003coordination}.
These systems turn out to be extremely challenging in terms of control and analysis, even for basic questions like stability or stabilizability.
In particular, the computation of the \emph{Joint Spectral Radius} (JSR), a measure of stability of switched linear systems, has been used as a benchmark for testing new approaches in complex systems \citep{blondel2005computationally,parrilo2008approximation,jungers2017acharacterization}.

Recently, the problem of JSR approximation was introduced for \emph{black-box} switched linear systems.
It is well known that bounds on the JSR of switched linear systems can be obtained from the resolution of adequate quasi-convex optimization problems built from the system \citep{jungers2017acharacterization}.
In \citet{kenanian2019data}, the authors extend this approach when the system is not known but only a few trajectories are observed, and apply chance-constrained optimization techniques to obtain probabilistic upper bounds and lower bounds on the JSR of the system.
In this work, we show that this approach fits in fact into the framework of chance-constrained quasi-linear optimization.
From this, probabilistic upper and lower bounds on the JSR of the system can be obtained straightforwardly, by applying the results introduced in this paper; the bounds obtained in that way are also better than the ones proposed in \citet{kenanian2019data}.

The paper is organized as follows.
In Section~\ref{sec-quasi-linear}, we introduce the class of quasi-linear optimization problems and discuss their properties.
In Section~\ref{sec-chance-constrained}, we state and prove the main theorem of this paper, which extends the results of chance-constrained \emph{convex} optimization to quasi-linear optimization problems.
Then, in Section~\ref{sec-JSR-black-box}, we apply the framework of chance-constrained quasi-linear optimization to the problem of stability analysis of black-box switched linear systems, and we show how this framework can be used to obtain probabilistic bounds on the JSR of the system.
Finally, in Section~\ref{sec-numerical-experiments}, we demonstrate the applicability of our results with several numerical examples.

{\itshape Notation.}
$\NNb$ denotes the set of nonnegative integers, and $\NNb_*$ the set of positive integers.
For a set of vectors $\calV\subseteq\Re^d$, $\conv(U)$ denotes the \emph{convex hull} of $U$, and $\cone(U)$ its \emph{conic hull}.
For a convex function $f:\Re^d\to\Re$ and $x\in\Re^d$, we let $\Sub_x(f)$ be the \emph{subdifferential} of $f$ at $x$, i.e., the set of vectors $g\in\Re^d$ such that $f(y)-f(x)\geq g^\top(y-x)$ for all $y\in\Re^d$; for a convex set $\calC\subseteq\Re^d$, we let $\Nor_x(\calC)$ be the \emph{normal cone} of $\calC$ at $x$, i.e., the set of vectors $g\in\Re^d$ such that $g^\top(y-x)\leq0$ for all $y\in\calC$.
If $\Delta$ is a set, $\omegab\coloneqq(\delta_1,\ldots,\delta_N)\in\Delta^N$ and $\delta_{N+1}\in\Delta$, we use $\omegab\Vert\delta_{N+1}$ to denote their \emph{concatenation}: $\omegab\Vert\delta_{N+1}\coloneqq(\delta_1,\ldots,\delta_{N+1})$; in Section~\ref{sec-chance-constrained}, for the sake of simplicity, we will slightly abuse the notation and write $\omega$ to denote the \emph{set} obtained from the elements of $\omegab\coloneqq(\delta_1,\ldots,\delta_N)$, i.e., $\omega=\{\delta_1,\ldots,\delta_N\}$.

\section{Quasi-linear optimization problems}\label{sec-quasi-linear}

In this section, we introduce a novel class of optimization problems, which are a particular case of quasi-convex problems. 
We particularize and improve some classical results of quasi-convex programming to this class.

Let $\calX$ be a compact convex subset of $\Re^d$, with nonempty interior and with $0\notin\calX$.
Let $\Delta$ be a set, and $\{a_\delta\}_{\delta\in\Delta}$ and $\{b_\delta\}_{\delta\in\Delta}$ be two collections --- indexed by $\delta\in\Delta$ --- of vectors in $\Re^d$ and such that $b_\delta^\top x>0$ for all $x\in\calX$ and $\delta\in\Delta$.
Consider the following optimization problem:
\begin{equation}\label{eq-optim}
\min\limits_{x\in\Re^d,\,\lambda\geq0}\; (\lambda,c(x)) \quad\text{s.t.}\quad x\in\calX, \quad\text{and}\quad a_\delta^\top x \leq \lambda b_\delta^\top x, \quad \forall\,\delta\in\Delta,
\end{equation}
where $c:\calX\to\Re$ is a strongly convex function.
The objective of \eqref{eq-optim} is to minimize $(\lambda,c(x))$ in the \emph{lexicographical order}%
\footnote{``First component first'': $(\lambda_1,c_1)<(\lambda_2,c_2)$ if $\lambda_1<\lambda_2$, or else $\lambda_1=\lambda_2$ and $c_1<c_2$.}%
, while respecting the constraints defined by $\Delta$ and $x\in\calX$.
See Figure~\ref{fig-quasi-linear} for an illustration.

\begin{figure}
\centering
\includegraphics[width=0.9\textwidth]{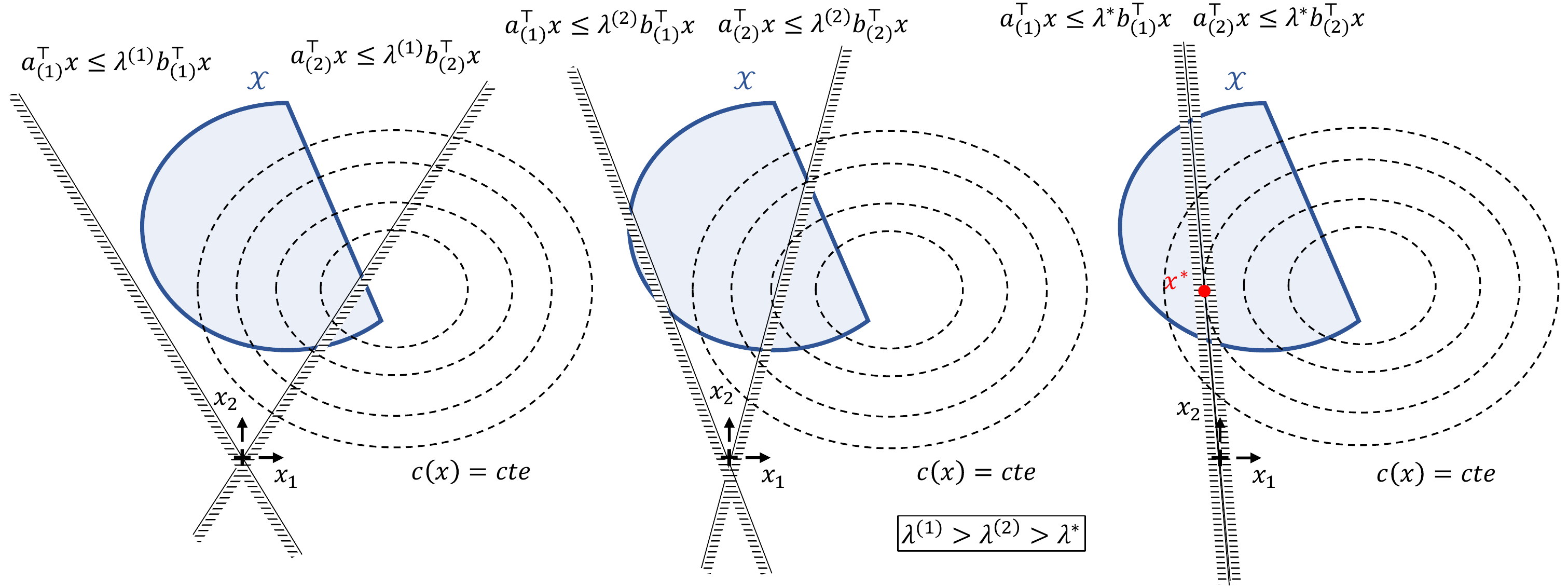}
\caption{Set of feasible points $x\in\Re^d$ of a quasi-linear optimization problem \eqref{eq-optim}, for three different values of $\lambda$.
The blue set $\calX$ represents the fixed constraints, while the two quasi-linear constraints are represented in black.
The dotted curves are level-curves of the secondary cost function $c(x)$.
The smallest $\lambda$ for which there is a feasible point $x$ is the optimal $\lambda$, denoted $\lambda^*$.
For this value of $\lambda$, the feasible point $x$ that minimizes $c$ is the optimal point $x$, denoted $x^*$.}
\label{fig-quasi-linear}
\end{figure}

Sometimes, it is not possible to solve \eqref{eq-optim} with all the constraints defined by $\Delta$, either because only a subset of these constraints are known (as it is the case for instance in data-driven control problems), or because the set $\Delta$ is so large (or even infinite) that it is algorithmically impracticable to enforce all of these constraints.
In these cases, for a finite set $\omega\subseteq\Delta$, we consider the following \emph{sampled} optimization problem:
\begin{equation}\label{eq-optim-sampled}
\calP(\omega):\quad\min\limits_{x\in\Re^d,\,\lambda\geq0}\; (\lambda,c(x)) \quad\text{s.t.}\quad x\in\calX, \quad\text{and}\quad a_\delta^\top x \leq \lambda b_\delta^\top x, \quad \forall\,\delta\in\omega.
\end{equation}
We let $\Opt(\omega)$ be the optimal solution%
\footnote{By the strong convexity of $c$, $\Opt(\omega)$ is unique.}
of $\calP(\omega)$ and we let $\Cost(\omega)$ be its optimal cost.
The constraints of $\calP(\omega)$ defined by $\omega$ will be called the \emph{sampled constraints}, while the constraint $x\in\calX$ is the \emph{common constraint}.

For a fixed value of $\lambda$, the sampled constraints of $\calP(\omega)$ are linear in $x$.
Therefore, we will say that $\calP(\omega)$ is a \emph{quasi-linear optimization problem}.
Note that $\calP(\omega)$ is a particular instance of quasi-convex optimization problems, as defined in \citet{eppstein2005quasiconvex}.
It is shown there that, under some technical assumption on the continuity of the constraints, the cardinality of any \emph{essential set} (see Definition~\ref{def-essential-set} below) of a quasi-convex problem is upper bounded by $d+1$, where $d$ is the dimension of $x$.
In this paper, we provide \emph{for quasi-linear problems} a better upper bound on the cardinality of their essential sets, and without the technical assumption of ``continuously shrinking'' constraints, present in \citet{eppstein2005quasiconvex}.

\begin{definition}\label{def-essential-set}
\citep[Definition~2.9]{calafiore2010random}
An \emph{essential set}\, for $\calP(\omega)$ is a set $\beta\subseteq\omega$, with minimal cardinality, satisfying $\Cost(\beta)=\Cost(\omega)$.
\end{definition}

\begin{theorem}\label{thm-essential-set-cardinality}
The cardinality of any essential set $\beta$ of\, $\calP(\omega)$ satisfies $\lvert\beta\rvert\leq d$.
\end{theorem}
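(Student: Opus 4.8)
The plan is to exploit the fact that an essential set has \emph{minimal} cardinality among all $\beta\subseteq\omega$ with $\Cost(\beta)=\Cost(\omega)$; hence it suffices to exhibit \emph{one} such subset of size at most $d$. Write $(\lambda^*,x^*)=\Opt(\omega)$ and set $h_\delta\coloneqq a_\delta-\lambda^* b_\delta$, so that at the optimal level a sampled constraint reads $h_\delta^\top x\le0$; call $\delta$ \emph{active} if $h_\delta^\top x^*=0$, and let $A$ be the active set. First I would record first-order optimality conditions for the lexicographic optimum (using the optimality-conditions result established earlier, together with the subdifferential/normal-cone calculus). Since $x^*$ minimizes the strongly convex $c$ over $\calX\cap\{x:h_\delta^\top x\le0,\ \delta\in\omega\}$, one obtains a \emph{secondary} condition $-\nabla c(x^*)\in\Nor_{x^*}(\calX)+\cone\{h_\delta:\delta\in A\}$; and, since $\lambda^*$ cannot be decreased, a \emph{primary} condition stating that some nontrivial nonnegative combination of the active $h_\delta$ lies in $-\Nor_{x^*}(\calX)$, i.e.\ $0\in\Nor_{x^*}(\calX)+\cone\{h_\delta:\delta\in A\}$ with at least one positive weight on the $h_\delta$.

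The key observation --- and the reason the bound improves on the $d+1$ of \citet{eppstein2005quasiconvex} --- is that every active vector satisfies $h_\delta^\top x^*=0$, so that all the $h_\delta$, $\delta\in A$, lie in the hyperplane $(x^*)^\perp$, which has dimension $d-1$; this hyperplane is genuinely $(d-1)$-dimensional because $x^*\ne0$, which is guaranteed by the hypothesis $0\notin\calX$. I would then project both optimality conditions onto $(x^*)^\perp$ and apply Carath\'eodory's theorem for cones in dimension $d-1$, regarding the term coming from $\Nor_{x^*}(\calX)$ as supplied for free by the ever-present common constraint $x\in\calX$. This selects a subset $\beta\subseteq A$ of at most $(d-1)+1=d$ sampled constraints whose associated $h_\delta$ still carry both conditions, and one checks that $(\lambda^*,x^*)$ remains the optimum of $\calP(\beta)$; hence $\Cost(\beta)=\Cost(\omega)$ and $\lvert\beta\rvert\le d$.

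The main obstacle is the \emph{simultaneous} satisfaction of the two conic conditions by a single family of $d$ vectors: the secondary condition alone is a conic-membership statement that Carath\'eodory trims to at most $d-1$ generators, while the primary condition forces the retained set to be positively dependent (to contain $0$ as a nontrivial nonnegative combination), which may cost one extra vector. I expect the delicate part to be proving that a minimal subset carrying $-\nabla c(x^*)$ conically while remaining positively dependent has size at most $\dim\spann\{h_\delta\}+1\le d$, together with the bookkeeping needed when $x^*\in\partial\calX$ (so that $\Nor_{x^*}(\calX)\ne\{0\}$) and in the degenerate regimes $\lambda^*=0$ or $\nabla c(x^*)=0$, where one or both conditions become vacuous and the bound only improves.
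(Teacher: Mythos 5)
Your plan assembles the same ingredients as the paper's proof: the first-order optimality conditions via Lemma~\ref{lem-condition-optimal}, Carath\'eodory's theorem, and --- crucially --- the observation that every active $h_\delta$ satisfies $h_\delta^\top x^*=0$ with $x^*\neq0$ (guaranteed by $0\notin\calX$), so that all relevant vectors live in a $(d-1)$-dimensional subspace; this is indeed the source of the improvement from $d+1$ to $d$. However, the step you flag as ``delicate'' is a genuine gap, and the accounting you sketch for it does not work. You propose trimming the secondary condition to at most $d-1$ generators and then paying ``one extra vector'' to restore positive dependence (the primary condition). But a set $S_1$ of up to $d-1$ vectors carrying $-\nabla c(x^*)$ conically need not become positively dependent modulo $\Nor_{x^*}(\calX)$ after adjoining a single further active vector: with active directions $e_1$, $e_2$, $-e_1-e_2$ in a two-dimensional hyperplane, the secondary condition may be carried by $\{e_1\}$ alone, while positive dependence requires \emph{both} remaining vectors. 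So the bound $\lvert S_1\rvert+1$ is unjustified, and the claim that a minimal subset carrying both conditions simultaneously has size at most $\dim\spann\{h_\delta\}+1$ is exactly what remains to be proved --- you state it as an expectation, not a lemma with a proof.

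The paper closes this gap with an adaptive budget split rather than a single Carath\'eodory application. For $\lambda^*>0$ it first selects a minimal $\gamma'$ with $0\in\conv(\{h_\delta\}_{\delta\in\gamma'})+\Nor_{x^*}(\calX)$; Carath\'eodory gives $\lvert\gamma'\rvert\leq(d-f)+1$, where $f$ is the dimension of the subspace orthogonal to $\{h_\delta\}_{\delta\in\gamma'}$. Once these constraints are retained, the residual secondary optimization is confined to that $f$-dimensional subspace, which still contains $x^*\neq0$; the $\lambda^*=0$ argument applied there yields at most $f-1$ additional constraints, for a total of $(d-f+1)+(f-1)=d$. The case $\lambda^*=0$ is treated separately (two subcases, according to whether $x\in\calX$ is a support constraint) and gives the stronger bound $d-1$. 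To complete your proof you would need either to reproduce this two-stage selection, letting the split between the primary and secondary budgets depend on $\dim\spann\{h_\delta\}_{\delta\in\gamma'}$, or to give an independent proof of the combined-conditions bound; as written, the crux is asserted rather than established.
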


To prove this theorem, we will need the following lemma.

\begin{lemma}\label{lem-condition-optimal}
\citep[Theorem~27.4]{rockafellar1970convex}
Let $f:\Re^d\to\Re$ be a convex function and $\calC\subseteq\Re^d$ a nonempty convex set.
Then, $x\in\calC$ is a mininizer%
\footnote{I.e., $f(x^*)=\inf_{x\in\calC}f(x)$.}
of $f$ over $\calC$ if and only if\, $0\in\Sub_x(f)+\Nor_x(\calC)$.
\end{lemma}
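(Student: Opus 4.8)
The plan is to construct, from the optimal solution $(\lambda^*,x^*)=\Opt(\omega)$, an explicit subset $\beta\subseteq\omega$ with $\lvert\beta\rvert\le d$ and $\Cost(\beta)=\Cost(\omega)$; since an essential set is one of \emph{minimal} cardinality among such subsets, this immediately gives $\lvert\beta\rvert\le d$. First I would freeze the optimal first coordinate $\lambda^*$ and linearise: for fixed $\lambda^*$ the sampled constraints become the homogeneous half-spaces $v_\delta^\top x\le 0$ with $v_\delta\coloneqq a_\delta-\lambda^* b_\delta$, and I let $A\coloneqq\{\delta\in\omega : v_\delta^\top x^*=0\}$ be the active set at $x^*$. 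Because $b_\delta^\top x>0$ on $\calX$, each ratio $g_\delta(x)\coloneqq a_\delta^\top x/b_\delta^\top x$ is linear-fractional, hence monotone along line segments, and $g_\delta(x)\le\lambda\iff(a_\delta-\lambda b_\delta)^\top x\le 0$.

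Next I would extract two first-order conditions at $x^*$, both from Lemma~\ref{lem-condition-optimal}. \emph{(i) Optimality of $\lambda^*$.} Since $\lambda^*=\min_{x\in\calX}\max_{\delta\in\omega}g_\delta(x)$, a short argument along the segment from $x^*$ to any competitor (using monotonicity of the $g_\delta$ on $A$ and continuity for the slack ones) shows that $x^*$ minimises the \emph{convex} function $x\mapsto\max_{\delta\in A}v_\delta^\top x$ over $\calX$, with value $0$; its subdifferential being $\conv\{v_\delta:\delta\in A\}$, Lemma~\ref{lem-condition-optimal} yields $0\in\conv\{v_\delta:\delta\in A\}+\Nor_{x^*}(\calX)$. \emph{(ii) Optimality of $x^*$ for $c$.} Here $x^*$ minimises $c$ over the polyhedrally restricted set $\{x\in\calX:v_\delta^\top x\le 0\ \forall\delta\in A\}$, whose normal cone at $x^*$ is $\Nor_{x^*}(\calX)+\cone\{v_\delta:\delta\in A\}$ (the constraints being polyhedral, the usual sum rule applies once $\relint(\calX)$ meets them), so Lemma~\ref{lem-condition-optimal} gives $0\in\Sub_{x^*}(c)+\Nor_{x^*}(\calX)+\cone\{v_\delta:\delta\in A\}$. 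Conversely, I would verify that any $\beta\subseteq A$ satisfying both memberships makes $(\lambda^*,x^*)$ again the lexicographic optimum of $\calP(\beta)$, hence $\Cost(\beta)=\Cost(\omega)$.

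The improvement from the quasi-convex bound $d+1$ to $d$ comes from the observation that, as $0\notin\calX$, we have $x^*\neq 0$, while every active vector satisfies $v_\delta^\top x^*=0$; thus all $v_\delta$ with $\delta\in A$ lie in the hyperplane $H\coloneqq\{w\in\Re^d:w^\top x^*=0\}$, of dimension $d-1$. The closing step is to push the two memberships into $H$ — the conic and convex combinations of the $v_\delta$ already live there, and I would project the $\Nor_{x^*}(\calX)$ and $\Sub_{x^*}(c)$ terms — and then invoke Carathéodory's theorem \emph{inside} $H$ to select one common index set $\beta$ supporting both certificates: the conic combination of (ii) costs at most $d-1$ generators, and the normalisation intrinsic to the convex combination of (i) accounts for the single extra generator, for a total of $(d-1)+1=d$.

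I expect the main obstacle to be this last reconciliation. Condition (i) is a \emph{convex} combination (it certifies that $\lambda$ cannot be lowered) whereas condition (ii) is a \emph{conic} combination, and applying Carathéodory to each in isolation would yield two supports summing to roughly $2d$; one cannot drop (i), since preserving $c$-optimality at level $\lambda^*$ does \emph{not} in general prevent a feasible descent in $\lambda$ for the reduced problem (the gradient of $c$ pollutes the would-be convex certificate). The crux is therefore to \emph{merge} the two certificates on a shared support, exploiting that both live in the $(d-1)$-dimensional $H$ and share the generators $\{v_\delta\}$, so that a single Carathéodory reduction retains a common index set of size at most $d$; the degenerate case in which the unconstrained minimiser of $c$ over $\calX$ is already feasible (so the $c$-certificate uses no $v_\delta$) should be handled separately, as only condition (i) then constrains $\beta$.
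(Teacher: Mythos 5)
You have proved the wrong statement. The statement assigned is Lemma~\ref{lem-condition-optimal}, the first-order optimality condition for convex minimization ($x\in\calC$ minimizes $f$ over $\calC$ if and only if $0\in\Sub_x(f)+\Nor_x(\calC)$); the paper offers no proof of it at all, importing it verbatim from \citet[Theorem~27.4]{rockafellar1970convex}. What you sketch is instead a proof of Theorem~\ref{thm-essential-set-cardinality}, the bound $\lvert\beta\rvert\leq d$ on essential sets --- and, fatally for the assigned task, your steps (i) and (ii) each \emph{invoke} Lemma~\ref{lem-condition-optimal} as a black box, so as an argument for the lemma itself your text is circular. Had the goal really been to prove the lemma, the shape would be: the ``if'' direction is two lines (write $0=g+v$ with $g\in\Sub_x(f)$ and $v\in\Nor_x(\calC)$; then for every $y\in\calC$, $f(y)-f(x)\geq g^\top(y-x)=-v^\top(y-x)\geq0$); the ``only if'' direction is the substantive one: $x$ minimizes $f$ plus the indicator function of $\calC$ over all of $\Re^d$, hence $0$ lies in the subdifferential of that sum, and one must justify the subdifferential sum rule (the subdifferential of the sum equals $\Sub_x(f)+\Nor_x(\calC)$), which holds here because $f$ is finite-valued on all of $\Re^d$, hence continuous, so the relative-interior qualification of \citet[Theorem~23.8]{rockafellar1970convex} is automatic. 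None of this appears in your proposal.

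For what it is worth, read as an attempt at Theorem~\ref{thm-essential-set-cardinality}, your sketch is close in spirit to the paper's actual proof of that theorem --- same active vectors $h_\delta=a_\delta-\lambda^* b_\delta$, same observation that they lie in the $(d-1)$-dimensional hyperplane orthogonal to $x^*$ (which the paper exploits in its Case~1, $\lambda^*=0$), same appeal to Carath\'eodory --- but it stalls exactly at the step you yourself flag as the ``crux''. The paper never merges the $\lambda$-certificate and the $c$-certificate through a single shared-support Carath\'eodory reduction. Instead it splits on $\lambda^*=0$ versus $\lambda^*>0$, and in the second case it first reduces the certificate $0\in\conv(\{h_\delta\}_{\delta\in\gamma'})+\Nor_{x^*}(\calX)$ to at most $d-k+1$ generators, where $k$ is the dimension of the subspace orthogonal to the selected vectors $h_\delta$, and then treats the secondary minimization of $c$ as a fresh convex problem restricted to that $k$-dimensional subspace, costing at most $k-1$ further constraints, for a total of $(d-k+1)+(k-1)=d$. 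So the open reconciliation problem in your last paragraph is resolved in the paper by dimension bookkeeping across a case split, not by the joint reduction you hope for; even reinterpreted charitably, your proposal is incomplete at its decisive step.
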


\begingroup
\renewcommand{\proofname}{Proof of Theorem~\ref{thm-essential-set-cardinality}}
\begin{proofbis}
Let $\beta$ be an essential set for $\calP(\omega)$ and let $(\lambda^*,x^*)=\Opt(\omega)$.
For each $\delta\in\omega$, let $h_\delta = a_\delta-\lambda^*b_\delta$.
Let $\gamma\subseteq\omega$ be the set of all $\delta\in\omega$ such that $h_\delta^\top x^*=0$.
We divide the proof in two cases.

\emph{Case~1:}
First, we consider the case when $\lambda^*=0$.
Assume that $x\in\calX$ is a \emph{support constraint}, meaning that the optimal cost of $\calP(\omega)$ without this constraint is strictly smaller than $\Cost(\omega)$.
Then, by the classical argument%
\footnote{Indeed, $\calP(\omega)$ with $\lambda$ fixed to zero is a convex optimization problem, and the cardinality of essential sets of feasible convex optimization problems is bounded by $d$ \citep[see, e.g.,][Theorem~3]{calafiore2006thescenario}.}%
, there is a set of at most $d$ constraints among those of $\calP(\omega)$ (i.e., among the constraints defined by $\omega$, and the constraint $x\in\calX$) such that the optimal solution of the problem with these constraints only is equal to $\Cost(\omega)$.
Because $x\in\calX$ is a support constraint, it must belong to this set of constraints.
Hence, there is a set $\beta'\subseteq\omega$, with $\lvert\beta'\rvert\leq d-1$, such that $\Cost(\beta')=\Cost(\omega)$.
This shows that $\lvert\beta\rvert\leq d-1$ when $x\in\calX$ is a support constraint.

Now, assume that $x\in\calX$ is not a support constraint, i.e., the optimal cost of $\calP(\omega)$ without this constraint is the same as $\Cost(\omega)$.
By Lemma~\ref{lem-condition-optimal}, it holds that $0\in\Sub_{x^*}(c)+\cone(\{h_\delta\}_{\delta\in\gamma})$.
Note that, by definition of $\gamma$, the vectors $\{h_\delta\}_{\delta\in\gamma}$ are all orthogonal to $x^*$, so that they belong to a $(d-1)$-dimensional subspace.
Hence, by Caratheodory theorem%
\footnote{See, e.g., \citet[Corollary~17.1.2]{rockafellar1970convex}.}%
, there is a set $\gamma'\subseteq\gamma$, with $\lvert\gamma'\rvert\leq d-1$, such that $0\in\Sub_{x^*}(c)+\cone(\{h_\delta\}_{\delta\in\gamma'})$.
By Lemma~\ref{lem-condition-optimal}, it thus follows that $\Cost(\gamma')=\Cost(\omega)$.
This shows that $\lvert\beta\rvert\leq d-1$ when $x\in\calX$ is not a support constraint; concluding the proof for the first case.

\emph{Case~2:}
Now, we consider the case when $\lambda^*>0$.
By Lemma~\ref{lem-condition-optimal} applied on $f(x)=\sup_{\delta\in\gamma}h_\delta^\top x$ and $\calC=\calX$, it follows that $0\in\conv(\{h_\delta\}_{\delta\in\gamma})+\Nor_{x^*}(\calX)$.
Let $\gamma'\subseteq\gamma$ be a nonempty subset with minimal cardinality such that $0\in\conv(\{h_\delta\}_{\delta\in\gamma'})+\Nor_{x^*}(\calX)$.
By Caratheodory theorem, it holds that $\lvert\gamma'\rvert\leq d-f+1$ where $f$ is the dimension of the linear subspace orthogonal to $\{h_\delta\}_{\delta\in\gamma'}$.
We conclude the proof by using the same argument as in case~1: since the problem is now restricted to an $f$-dimensional problem (because $x$ is in the subspace orthogonal to $\{h_\delta\}_{\delta\in\gamma'}$), we may find a set $\beta'\subseteq\omega$, with $\lvert\beta'\rvert\leq f-1$, such that $\Opt(\gamma'\cup\beta')=(\lambda^*,x^*)$.
This shows that $\lvert\beta\rvert\leq\lvert\gamma'\rvert+\lvert\beta'\rvert\leq d$; concluding the proof for the second case.
\end{proofbis}
\endgroup

\section{Chance-constrained quasi-linear optimization}\label{sec-chance-constrained}

Let $\Prob$ be a probability measure on $\Delta$.
Suppose that the constraints $\delta_1,\ldots,\delta_N$ are sampled from $\Delta$ according to $\Prob$, and that we solve the problem $\calP(\omega_N)$ where $\omega_N=\{\delta_1,\ldots,\delta_N\}$.
This approach of solving the optimization problem for a few randomly sampled constraints is called \emph{chance-constrained optimization}.
Under certain assumptions, probabilistic guarantees can be obtained on the measure of the set of constraints $\delta\in\Delta$ that are compatible with the optimal solution of $\calP(\omega_N)$.
This is the case, for instance, for a large class of convex optimization problems \citep[see, e.g.,][]{calafiore2010random} and non-convex optimization problems \citep[though with weaker probabilistic guarantees; see, e.g.,][]{campi2018ageneral}.
In the section, we extend the results from chance-constrained convex optimization \citep{calafiore2010random} to chance-constrained quasi-linear problems.

Therefore, we make the following standing assumption on the set $\Delta$ and on its probability measure $\Prob$.
First, let us introduce the notion of \emph{non-degenerate} vector of constraints.

\begin{definition}\label{def-nondegenerate}
\citep[Definition~2.11]{calafiore2010random}
Let $N\in\NNb_*$.
We say that $\omegab_N\coloneqq(\delta_1,\ldots,\delta_N)\in\Delta^N$ is \emph{non-degenerate} if there is a unique set $I\subseteq\{1,\ldots,N\}$ such that $\{\delta_i\}_{i\in I}$ is an essential set for $\calP(\omega_N)$.
\end{definition}

\begin{assumption}\label{ass-nondegenerate}
\citep[Assumption~2]{calafiore2010random}
For every $N\in\NNb_*$, the vector $\omegab_N\in\Delta^N$ is non-degenerate with probability one.
\end{assumption}

For any vector of constraints $\omegab_N\in\Delta^N$, we define the \emph{violating probability} associated to $\omegab_N$:
\[
V(\omegab_N) = \Prob(\{\delta\in\Delta : \Cost(\omega_N\cup\{\delta\})>\Cost(\omega_N)\}).
\]
We are now able to present the extension of \citet[Theorem~3.3]{calafiore2010random} to chance-constrained quasi-linear programs.
Therefore, let $\zeta\in\NNb_*$ be an upper bound on the cardinality of any essential set of $\calP(\omega)$, with finite $\omega\subseteq\Delta$.
From Theorem~\ref{thm-essential-set-cardinality}, it holds that $\zeta\leq d$.

\begin{theorem}\label{thm-chance-constrained-quasi-linear}
Consider the sampled quasi-linear optimization problem \eqref{eq-optim-sampled}, and let $V(\omegab_N)$ and $\zeta$ be as above.
Let Assumption~\ref{ass-nondegenerate} hold.
Let $N\in\NNb$, $N\geq\zeta$, and let $\varepsilon\in(0,1)$.
Then,%
\[
\Prob^N(\{\omegab_N\in\Delta^N : V(\omegab_N)>\varepsilon\})\leq\Phi(\varepsilon,\zeta-1,N),
\]
where $\Phi(\cdot,\zeta-1,N)$ is the regularized incomplete beta function%
\footnote{See, e.g., \citet[Definition~2]{kenanian2019data}.}%
.
\end{theorem}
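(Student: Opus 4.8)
The plan is to show that the sampled quasi-linear problem $\calP(\omega_N)$ fits the abstract framework of random convex programs in \citet{calafiore2010random}, so that the probabilistic argument behind \citet[Theorem~3.3]{calafiore2010random} transfers essentially verbatim; the only problem-specific ingredient is the cardinality bound already secured in Theorem~\ref{thm-essential-set-cardinality}. Concretely, I would first record the three structural facts the abstract argument requires. \emph{(i) Monotonicity}: for finite $\omega\subseteq\omega'\subseteq\Delta$ one has $\Cost(\omega)\leq\Cost(\omega')$, which is immediate since enlarging the constraint set only shrinks the feasible region of \eqref{eq-optim-sampled} while leaving the lexicographic objective unchanged. \emph{(ii) Uniqueness of the essential set}, which is exactly Assumption~\ref{ass-nondegenerate} (holding with probability one). \emph{(iii)} The bound $\lvert\beta\rvert\leq\zeta\leq d$ from Theorem~\ref{thm-essential-set-cardinality}. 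A consequence of (i)--(ii) that I will use repeatedly is that $V(\omegab_N)$ depends on $\omegab_N$ only through its essential set: a fresh sample $\delta$ is \emph{violating} (i.e.\ $\Cost(\omega_N\cup\{\delta\})>\Cost(\omega_N)$) precisely when $\delta$ is not satisfied at $\Opt(\omega_N)$, and $\Opt(\omega_N)=\Opt(\beta)$; equivalently, $\delta$ is violating for $\omegab_N$ iff $\delta$ belongs to the essential set of the augmented tuple $\omegab_N\Vert\delta$.

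Next I would run the exchangeability decomposition. Since the samples are i.i.d., the law of $\omegab_N$ is invariant under coordinate permutations, so for each $k\in\{0,\ldots,\zeta\}$ the event that the unique essential set is indexed by a given $k$-subset $I\subseteq\{1,\ldots,N\}$ has the same probability for all such $I$; this lets me condition on $I=\{1,\ldots,k\}$ and multiply by $\binom{N}{k}$. On this event the cost, and hence $V$, is determined by $\delta_1,\ldots,\delta_k$ alone, while each of the remaining $N-k$ samples is non-violating. Given the essential $k$-tuple with violation value $V=v$, the $N-k$ leftover samples are i.i.d.\ and independently non-violating with probability $1-v$, contributing a factor $(1-v)^{N-k}$. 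This yields the representation
\[
\Prob^N(\{\omegab_N : V(\omegab_N)\leq\varepsilon\})=\sum_{k=0}^{\zeta}\binom{N}{k}\int_0^\varepsilon(1-v)^{N-k}\,\diff\nu_k(v),
\]
where each $\nu_k$ is the ($N$-independent) sub-probability law of $V$ carried by the self-essential $k$-tuples drawn from $k$ samples, and the requirement $N\geq\zeta$ guarantees that all exponents and binomials are meaningful.

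The remaining, and most delicate, step is to bound this mixture of integrals by the closed form $\Phi(\varepsilon,\zeta-1,N)=\sum_{i=0}^{\zeta-1}\binom{N}{i}\varepsilon^i(1-\varepsilon)^{N-i}$. Here I would invoke the quantitative core of \citet{calafiore2010random}: reading the $\varepsilon=1$ instance of the displayed identity as a normalization valid for \emph{every} $N$ pins down the family $\{\nu_k\}$ enough to show that the contribution of each essential-set size $k$ to the tail $\Prob^N(\{V>\varepsilon\})$ is at most the binomial-tail term associated with $k$ genuinely active support constraints, namely $\Phi(\varepsilon,k-1,N)$. Because $\Phi(\varepsilon,\cdot,N)$ is nondecreasing in its middle argument and $k\leq\zeta$ by Theorem~\ref{thm-essential-set-cardinality}, averaging these conditional tails over $k\in\{0,\ldots,\zeta\}$ gives $\Prob^N(\{V>\varepsilon\})\leq\Phi(\varepsilon,\zeta-1,N)$, which is the regularized incomplete beta function.

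The step I expect to be the main obstacle is precisely this last quantitative extraction: turning the leftover-sample factors $(1-v)^{N-k}$, coupled across all $N$, into the per-size binomial-tail bound. The structural reductions of the first paragraph are routine once monotonicity and Assumption~\ref{ass-nondegenerate} are in place, and the exchangeability bookkeeping is standard; but the passage from the integral identities to the closed-form incomplete beta bound is the genuinely probabilistic content, and is where I would lean directly on the machinery of \citet{calafiore2010random}, now legitimately applicable since Theorem~\ref{thm-essential-set-cardinality} supplies the cardinality bound $\zeta\leq d$ without the ``continuously shrinking'' hypothesis of \citet{eppstein2005quasiconvex}.
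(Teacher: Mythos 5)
Your setup (monotonicity, reduction to the essential set, exchangeability) matches the paper's, but the proof breaks at exactly the step you flag as the main obstacle, and the fix you gesture at is not the one \citet{calafiore2010random} actually provides. Your decomposition by essential-set \emph{size},
\[
\Prob^N(\{\omegab_N : V(\omegab_N)\leq\varepsilon\})=\sum_{k=0}^{\zeta}\binom{N}{k}\int_0^\varepsilon(1-v)^{N-k}\,\diff\nu_k(v),
\]
is legitimate, but the $\varepsilon=1$ normalization identities constrain the family $\{\nu_k\}$ only jointly; they do not pin down the individual measures, and the asserted per-size bound (``the contribution of size $k$ is at most $\Phi(\varepsilon,k-1,N)$'', then average and use monotonicity in the middle argument) is precisely the statement that needs proof. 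It does not follow ``directly'' from the machinery of \citet{calafiore2010random}; indeed, conditional-on-$k$ bounds of this type are known to require corrections in general (this is the content of the later ``wait-and-judge'' literature), so the step is not merely unproven but doubtful as stated. The reader is left with no argument for the one inequality that constitutes the theorem.

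The paper sidesteps the size-$k$ decomposition entirely. It fixes a total order (labelling) on $\Delta$ and \emph{pads} the unique essential index set $J(\omegab_N)$ up to a set $J^*(\omegab_N)$ of exactly $\zeta$ indices using the largest-labelled remaining samples. This partitions $\Delta^N$ into $C(N,\zeta)$ events $S_i$, one per $\zeta$-subset, each of probability exactly $1/C(N,\zeta)$ by symmetry; conditioning on the violation value $v$ of the selected $\zeta$-tuple gives $\Prob[S_i\mid V^*(\omegab_{N,i})=v]=(1-v)^{N-\zeta}$, and the resulting identity $\int_0^1(1-v)^{N-\zeta}\,\diff F_i(v)=1/C(N,\zeta)$, holding for \emph{all} $N\geq\zeta$, is a Hausdorff moment problem whose unique solution is $F_i(v)=v^\zeta$. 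The closed form $\Phi(\varepsilon,\zeta-1,N)$ then drops out by an exact computation, with the theorem's inequality coming only from the final inclusion of the violation event in $\bigcup_i B_i$. If you want to salvage your route, you must either import this completion/tie-breaking device (after which the sum over $k$ disappears and only the single exponent $N-\zeta$ survives), or supply an independent argument --- e.g.\ a perturbation reducing to the fully supported case --- for the per-size conditional bound; as written, neither is done.
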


\begin{proof}
\citep[Adapted from][]{calafiore2010random}
Fix $N\in\NNb$, $N\geq\zeta$.
By Assumption~\ref{ass-nondegenerate}, we may assume without loss of generality that $\omegab_N$ is non-degenerate for all $\omegab_N\in\Delta^N$.
Hence, for each $\omegab_N\coloneqq(\delta_1,\ldots,\delta_N)\in\Delta^N$, we let $J(\omegab_N)$ be the unique set $I\in\{1,\ldots,N\}$ such that $\{\delta_i\}_{i\in I}$ is an essential set for $\calP(\omega_N)$.
Label the elements of $\Delta$ with labels belonging to a totally order set.%
\footnote{This approach, from \citet{calafiore2010random}, requires the \emph{axiom of choice} when $\Delta$ is a general set.
However, it is not needed for instance if $\Delta\subseteq\Re^n$, as it is the case in our application (see Section~\ref{sec-JSR-black-box}).}
Let $J^*(\omegab_N)$ be a completion of $J(\omegab_N)$ with the $\zeta-\lvert J(\omegab_N)\rvert$ elements of $\{1,\ldots,N\}\setminus J(\omegab_N)$ such that $\{\delta_i\}_{i\in J^*(\omegab_N)\setminus J(\omegab_N)}$ have the largest labels among the elements of $\omega_N$.
From Assumption~\ref{ass-nondegenerate}, it follows that $J^*(\omegab_N)$ is well defined with probability one; hence, in the following, we will assume without loss of generality that $J^*(\omegab_N)$ is well defined for all $\omegab_N\in\Delta^N$.

Let $\{I_1,\ldots,I_M\}$ be the set of all subsets of $\{1,\ldots,N\}$ with $\zeta$ elements; in particular, $M=N!/(\zeta!(N-\zeta)!)\triangleq C(N,\zeta)$.
For each $i=1,\ldots,M$, let $S_i=\{\omegab_N\in\Delta^N:J^*(\omegab_N)=I_i\}$.
The sets $\{S_i\}_{1\leq i\leq M}$ are disjoint and their union is equal to $\Delta^N$.
Moreover, by the symmetry of their definition, they have the same probability; hence $\Prob(S_i)=1/C(N,\zeta)$.

Now, for each $\omegab_\zeta\in\Delta^\zeta$, we let $V^*(\omegab_\zeta)$ be the \emph{violating probability} of $\omegab_\zeta$ with respect to \eqref{eq-optim-sampled} and the labelling of the constraints: that is, $V^*(\omegab_\zeta)=\Prob(\{\delta\in\Delta:J^*(\omegab_\zeta\Vert\delta)\neq\{1,\ldots,\zeta\})$.
From the uniqueness of the optimal solution of the problems $\calP(\omega)$, $\omega\subseteq\Delta$, it follows that for every $L\in\NNb_*$, $\omegab_L\in\Delta^L$ and $\delta,\eta\in\Delta$, if $J^*(\omegab_L\Vert\delta)=J^*(\omegab_L\Vert\eta)=J^*(\omegab_L)$, then $J^*((\omegab_L\Vert\delta)\Vert\eta)=J^*(\omegab_L)$.%
\footnote{See, e.g., \citet[\S2.1]{calafiore2010random} for details.}
It follows that, for any $v\in[0,1]$,
\[
\Prob[S_i\mid V^*(\omegab_{N,i})=v]=(1-v)^{N-\zeta},\quad \forall\,N\geq\zeta,\;i=1,\ldots,C(N,\zeta),
\]
where $\omegab_{N,i}$ is the restriction of $\omegab_N$ to the indices in $I_i$: $\omegab_{N,i}=(\delta_i)_{i\in I_i}$.
Hence, we get that
\begin{equation}\label{eq-Hausdorff}
\Prob(S_i)=\int_0^1 (1-v)^{N-\zeta} \, \diff F_i(v)=1/C(N,\zeta),\quad \forall\,N\geq\zeta,\;i=1,\ldots,C(N,\zeta),
\end{equation}
where $F_i(v)=\Prob^N(\{\omegab_N\in\Delta^N:V^*(\omegab_{N,i})\leq v\})$.
Equation~\eqref{eq-Hausdorff} describes a \emph{Hausdorff moment problem}; it is shown in \citet[p.~3436]{calafiore2010random} that \eqref{eq-Hausdorff} implies that $F_i(v)=v^\zeta$.

Finally, for each $i=1,\ldots,C(N,\zeta)$, we let $B_i=\{\omegab_N\in\Delta^N:V^*(\omegab_{N,i})>\varepsilon\}$.
Using the expression of $F_i$, it can be shown%
\footnote{See, e.g., \citet[Theorem~3.3]{calafiore2010random}.}
that $\Prob^N(B_i)=\Phi(\varepsilon,\zeta-1,N)/C(N,\zeta)$.
By symmetry, we get that $\Prob^N(\bigcup_{1\leq i\leq M}B_i)=\Phi(\varepsilon,\zeta-1,N)$.
Since $\{\omegab_N\in\Delta^N : V(\omegab_N)>\varepsilon\}\subseteq\bigcup_{1\leq i\leq M}B_i$, we obtain the desired result.
\end{proof}

\section{Application to data-driven stability analysis of switched linear systems}\label{sec-JSR-black-box}

Let $\calA=\{A_1,\ldots,A_m\}$ be a fixed set of matrices in $\Re^\nn$, and let $\SSb$ be the unit sphere (boundary of the unit Euclidean ball) in $\Re^n$.
Let $\Delta=\calA\times\SSb$, and let $\Prob$ be the \emph{uniform distribution} on $\Delta$.%
\footnote{I.e., $\Prob=\Prob_1\otimes\Prob_2$ where $\Prob_1$ and $\Prob_2$ are the uniform distributions on $\calA$ and $\SSb$ respectively.}
For a finite set $\omega\subseteq\Delta$, we consider the following sampled quasi-linear optimization problem:
\begin{equation}\label{eq-optim-jsr}
\begin{array}{r@{}l}
\displaystyle\ProbJSR(\omega):\quad \min\limits_{\substack{P=P^\top\in\Re^\nn,\\\gamma\geq0}} \; (\gamma,\lVert P\rVert_F^2) \quad\text{s.t.}\quad& P\in\calX\coloneqq\{P:P\succeq I \:\wedge\: \lVert P\rVert_F\leq C\}, \\[-10pt]
&\displaystyle (Ax)^\top P(Ax) \leq \gamma^2 x^\top Px, \quad \forall\,(A,x)\in\omega,
\end{array}
\end{equation}
for some fixed parameter $C\geq n$.
Note that $\ProbJSR(\omega)$ is a sampled, data-driven version of the classical quadratic Lyapunov framework for the approximation of the \emph{Joint Spectral Radius} (JSR) of the \emph{switched linear system} defined by $\calA$; see, e.g., \citet[Theorem~2.11]{jungers2009thejoint}.
The JSR is a ubiquituous measure of stability of switched linear systems \citep{blondel2005computationally,parrilo2008approximation,jungers2017acharacterization}; it also appears in other areas of hybrid system control, like wireless networked control \citep{berger2020worstcase}.

In order to apply the results from Section~\ref{sec-chance-constrained} on $\ProbJSR(\omega)$, we make the following assumption on the matrices in $\calA$.
First, let us introduce the notion of \emph{Barabanov} matrix.

\begin{definition}\label{def-barabanov}
A matrix $A\in\Re^\nn$ is said to be \emph{Barabanov} if there exists a symmetric matrix $P\succ0$ and $\gamma\geq0$ such that $A^\top PA=\gamma^2P$.
\end{definition}

\begin{assumption}\label{ass-no-barabanov}
There is no Barabanov matrix in $\calA$.
\end{assumption}

We claim that Assumption~\ref{ass-no-barabanov} is not restrictive in most of the practical situations.
To motivate this claim, we provide an equivalent characterization of Barabanov matrices in the proposition below, whose proof can be found in Appendix~\ref{sec-proof-pro-Barabanov}.
For further work, we plan to investigate the possibility to relax or remove this technical assumption.

\begin{proposition}\label{pro-Barabanov}
A matrix $A\in\Re^\nn$ is Barabanov if and only if it is diagonalizable and all its eigenvalues have the same modulus.
\end{proposition}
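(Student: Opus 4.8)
The plan is to reduce the defining identity to a statement about orthogonal matrices via a congruence, and then to read off the spectral characterization. First I would dispose of the degenerate case $\gamma=0$: since $P\succ0$, the identity $A^\top PA=0$ forces $(Ax)^\top P(Ax)=0$, hence $Ax=0$, for every $x$, so $A=0$, which is trivially diagonalizable with all eigenvalues equal (modulus $0$). For $\gamma>0$, write $P=P^{1/2}P^{1/2}$ with $P^{1/2}\succ0$ symmetric and set $M\coloneqq P^{1/2}AP^{-1/2}$. A direct computation (using that $P^{1/2}$ and $P^{-1/2}$ are symmetric) turns $A^\top PA=\gamma^2P$ into $M^\top M=\gamma^2I$, i.e. $M=\gamma U$ with $U$ orthogonal. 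Since $A$ and $M$ are similar, they share their eigenvalues and their diagonalizability; as $U$ is normal it is (complex-)diagonalizable with all eigenvalues of modulus $1$, so $A$ is diagonalizable and all its eigenvalues have modulus $\gamma$. This proves the forward implication.

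For the converse, suppose $A$ is diagonalizable over $\Co$ and all its eigenvalues have common modulus $\rho$. If $\rho=0$ then $A=0$ and we take $\gamma=0$, $P=I$. If $\rho>0$, set $\gamma=\rho$ and work with $B\coloneqq A/\gamma$, whose eigenvalues all have modulus $1$. The key step is to exhibit a \emph{real} orthogonal matrix similar to $B$: passing to the real canonical form, the real eigenvalues $\pm1$ contribute $1\times1$ blocks and each conjugate pair $e^{\pm i\theta}$ contributes a $2\times2$ rotation block $\left(\begin{smallmatrix}\cos\theta & -\sin\theta\\ \sin\theta & \cos\theta\end{smallmatrix}\right)$; since $B$ is diagonalizable there are no nontrivial Jordan blocks, so the canonical form $R$ is block-diagonal with orthogonal blocks, hence orthogonal. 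Writing $B=SRS^{-1}$ with $S$ real invertible, I would then verify that $P\coloneqq(SS^\top)^{-1}\succ0$ satisfies $B^\top PB=P$, and therefore $A^\top PA=\gamma^2P$, using $R^\top R=I$ together with $S^\top PS=I$. This produces the required $P$ and closes the equivalence.

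The main obstacle is the converse, and specifically the production of a real orthogonal representative from the hypothesis of complex diagonalizability: one must pair complex-conjugate eigenvalues into real $2\times2$ rotation blocks rather than working directly with a complex diagonalization, because the matrix $P$ in the definition of Barabanov is required to be real and symmetric. Once the real canonical form is in hand, the construction $P=(SS^\top)^{-1}$ and its verification are routine linear algebra. I would also be careful that \emph{diagonalizable} is understood over $\Co$, since a rotation block is diagonalizable over $\Co$ but not over $\Re$; this is exactly the flexibility needed to accommodate switched systems whose Lyapunov-critical matrices have genuinely complex spectra.
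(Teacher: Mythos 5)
Your proof is correct and follows essentially the same route as the paper's: the forward direction conjugates by a square root of $P$ (the paper uses the Cholesky factor $L$ where you use $P^{1/2}$) to reduce the identity to $M^\top M=\gamma^2 I$, and the converse builds $P$ from the real block-diagonal canonical form exactly as the paper does with $P=T^\top T$, which coincides with your $(SS^\top)^{-1}$ under $S=T^{-1}$. The only differences are notational, plus your (welcome but inessential) explicit handling of the $\gamma=0$ case and the remark that diagonalizability is meant over $\Co$.
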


We now show that Assumption~\ref{ass-no-barabanov} ensures that Assumption~\ref{ass-nondegenerate} holds for \eqref{eq-optim-jsr}.

\begin{proposition}\label{pro-QLP-index-basis}
Consider the sampled problem \eqref{eq-optim-jsr}.
Let Assumption~\ref{ass-no-barabanov} hold.
Then, for every $N\in\NNb_*$, the vector $\omegab_N\in\Delta^N$ is non-degenerate with probability one.
\end{proposition}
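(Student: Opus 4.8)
The plan is to show that the \emph{degeneracy event} --- that $\calP(\omega_N)$ admits two distinct essential sets --- has probability zero. Since essential sets have the same (minimal) cardinality, any two distinct essential index sets $K_1\neq K_2\subseteq\{1,\ldots,N\}$ satisfy $\lvert K_1\rvert=\lvert K_2\rvert$, so $K_1\setminus K_2\neq\emptyset$. As there are only finitely many pairs $(K_1,K_2)$ of subsets of $\{1,\ldots,N\}$, by a union bound it suffices to prove that for each fixed such pair the event $E_{K_1,K_2}$, that both $K_1$ and $K_2$ are essential for $\calP(\omega_N)$, has probability zero. Fix such a pair and pick $j\in K_1\setminus K_2$; write the $j$-th sample as $\delta_j=(A_j,x_j)$ with $A_j\in\calA$ and $x_j\in\SSb$.

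The first step is a lemma: every constraint belonging to an essential set is \emph{active} at the optimum. Indeed, if $\beta$ is essential and $i\in\beta$, then by minimality $\Cost(\beta\setminus\{i\})<\Cost(\beta)$, so constraint $i$ is a support constraint; I would then show that a support constraint must be active at $\Opt(\beta)=(\gamma^*,P^*)$. This is where the quasi-linear (rather than merely quasi-convex) structure is used: each constraint ratio $g_i(P)=\tfrac{(A_ix_i)^\top P(A_ix_i)}{x_i^\top P x_i}$ is a ratio of affine functions of $P$ with positive denominator (as $P\succeq I$), hence monotone along any segment in $\calX$; combining this monotonicity with the convexity of $\calX$ and the strong convexity of the secondary cost $\lVert P\rVert_F^2$, one shows that if constraint $i$ were inactive at $P^*$ one could construct a feasible point of $\calP(\beta)$ with strictly smaller cost, a contradiction. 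Consequently, on $E_{K_1,K_2}$ we have $\Opt(\omega_N)=\Opt(K_1)=\Opt(K_2)=:(\gamma^*,P^*)$, and since $j\in K_1$ which is essential, constraint $j$ is active at $(\gamma^*,P^*)$, i.e.\ $x_j^\top M\,x_j=0$ where $M=A_j^\top P^* A_j-(\gamma^*)^2P^*$.

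The crucial observation is that $(\gamma^*,P^*)=\Opt(K_2)$ depends only on the samples indexed by $K_2$, and $j\notin K_2$, so it does not depend on $x_j$. I would therefore condition on the $\sigma$-algebra generated by all samples except $x_j$ (in particular fixing the matrices, hence $A_j$, and fixing $\Opt(K_2)=(\gamma^*,P^*)$). Under this conditioning $M$ is a fixed symmetric matrix while $x_j$ remains uniformly distributed on $\SSb$, independent of the conditioning. The event $E_{K_1,K_2}$ is contained in $\{x_j^\top M x_j=0\}$, and since $x_j\mapsto x_j^\top M x_j$ is a quadratic form that is not identically zero on $\SSb$ whenever $M\neq0$, its zero set is the intersection of $\SSb$ with a proper quadric and thus has surface measure zero. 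Hence the conditional probability is zero, and integrating (Fubini) gives $\Prob(E_{K_1,K_2})=0$.

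It remains to guarantee $M\neq0$, and this is precisely where Assumption~\ref{ass-no-barabanov} enters: $M=0$ means $A_j^\top P^*A_j=(\gamma^*)^2P^*$ with $P^*\succeq I\succ0$ and $\gamma^*\geq0$, i.e.\ $A_j$ is a Barabanov matrix, which is excluded. Thus $M\neq0$ with probability one, and the union bound over the finitely many pairs $(K_1,K_2)$ yields that the degeneracy event has probability zero, proving non-degeneracy for every $N$. I expect the main obstacle to be the support-implies-active lemma: in the purely quasi-convex setting a local minimizer need not be global, so the argument genuinely relies on the quasi-\emph{linear} structure (monotonicity of the ratios $g_i$ along segments) together with the strong convexity of the secondary cost; care is also needed to verify that $\Opt(K_2)$ is truly independent of $x_j$ so that the frozen matrix $M$ inherits the non-Barabanov property from $\calA$.
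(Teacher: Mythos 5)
Your proposal is correct and follows essentially the same route as the paper: both reduce degeneracy to the event that a sample outside a fixed essential set is active at that set's optimum (which is independent of that sample), and both kill this event via the measure-zero argument for a nonzero quadratic form on the sphere, with Assumption~\ref{ass-no-barabanov} ruling out the form being identically zero. You are merely more explicit about the union bound over pairs of index sets and the ``essential implies active'' step, which the paper asserts without proof.
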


We will need the following lemma.

\begin{lemma}\label{lem-zero-set-measure}
Let $P(x_1,\ldots,x_n)$ be a nonzero polynomial on $\Re^n$.
The zero set of $P$, i.e., the set of points $x\in\Re^n$ such that $P(x)=0$, has zero Lebesgue measure.
\end{lemma}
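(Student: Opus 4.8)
The plan is to induct on the dimension $n$, using Tonelli's theorem to peel off one variable at a time. First I would fix notation: let $\mu_k$ denote $k$-dimensional Lebesgue measure, and observe that the zero set $Z$ of $P$ is closed---being the preimage of $\{0\}$ under the continuous map $P$---hence Borel measurable, so that all the measures appearing below are well defined. The base case $n=1$ is immediate: a nonzero univariate polynomial has at most $\deg P$ roots, so its zero set is finite and therefore $\mu_1$-null.

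For the inductive step, assuming the result in dimension $n-1$, I would regard $P$ as a polynomial in the single variable $x_n$ whose coefficients are polynomials in $(x_1,\ldots,x_{n-1})$,
\[ P(x_1,\ldots,x_n)=\sum_{k=0}^{d}c_k(x_1,\ldots,x_{n-1})\,x_n^k, \]
and let $c_{k_0}$ be the nonzero coefficient of largest index, which exists because $P\not\equiv0$. The inductive hypothesis applied to $c_{k_0}$ gives that $E\coloneqq\{y\in\Re^{n-1}:c_{k_0}(y)=0\}$ is $\mu_{n-1}$-null. For every $y\notin E$ the univariate polynomial $x_n\mapsto P(y,x_n)$ is nonzero---its leading coefficient $c_{k_0}(y)$ does not vanish---so by the base case its root set, i.e.\ the slice $\{x_n:(y,x_n)\in Z\}$, is finite and hence $\mu_1$-null.

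It then remains to combine these two facts by Tonelli's theorem, writing
\[ \mu_n(Z)=\int_{\Re^{n-1}}\mu_1(\{x_n:(y,x_n)\in Z\})\,\diff\mu_{n-1}(y). \]
The inner integrand vanishes for every $y\notin E$ by the previous step, and $E$ is $\mu_{n-1}$-null, so the integrand is zero almost everywhere; thus $\mu_n(Z)=0$, closing the induction. The argument is essentially routine, and I do not anticipate a genuine obstacle: the only points needing a word of care are the measurability of $Z$ (handled by continuity of $P$, so that Tonelli applies to the nonnegative measurable indicator of $Z$) and the bookkeeping in the inductive step---namely isolating the highest nonvanishing coefficient $c_{k_0}$ so that the locus where it vanishes is exactly the set covered by the inductive hypothesis.
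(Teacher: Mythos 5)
The paper does not actually prove this lemma; it explicitly skips the proof as a well-known fact and cites a textbook reference. Your argument---induction on dimension, isolating the highest nonvanishing coefficient $c_{k_0}$ so that for $\mu_{n-1}$-almost every $y$ the slice polynomial $P(y,\cdot)$ is a nonzero univariate polynomial with finitely many roots, then integrating the slice measures via Tonelli over the closed (hence measurable) zero set---is the standard proof of this fact, and it is correct and complete, including the degenerate case where $P$ does not depend on $x_n$ (then $c_{k_0}=P$ and the nonzero slices are empty). Nothing is missing.
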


We skip the proof of this well-known fact \citep[see, e.g.,][Problem~2.15]{teschltopics}.

\vspace{\topsep}

\begingroup
\renewcommand{\proofname}{Proof of Proposition~\ref{pro-QLP-index-basis}}
\begin{proofbis}
Let $1\leq i\leq N-1$.
Let us look at the probability that $\beta\coloneqq\{\delta_1,\ldots,\delta_i\}$ is an essential set for $\ProbJSR(\omega)$ and that $\delta_N$ is in another essential set.
This probability is smaller than or equal to the probability that $\beta$ is an essential set for $\ProbJSR(\omega)$ and that $(Ax)^\top P(Ax)=\gamma^2x^\top Px$, where $(\gamma,P)=\OptJSR(\beta)$ and $\delta_N=(A,x)$.

Assume that the above probability is nonzero.
Then, since $\calA$ is finite, that there is $A\in\calA$ such that $(Ax)^\top P(Ax)=\gamma^2x^\top Px$ for all $x$ in a set $S\subseteq\SSb$ with nonzero measure.
Thus, by Lemma~\ref{lem-zero-set-measure}, it holds that $(Ax)^\top P(Ax)=\gamma^2x^\top Px$ for all $x\in\SSb$.
This contradicts the assumption that there is no Barabanov matrix in $\calA$.
Hence, the probability that $\beta$ is a basis for $\ProbJSR(\omega)$ and that $\delta_N$ is in another basis is zero.
Since $\beta$ and $\delta_N$ were arbitrary, this concludes the proof.
\end{proofbis}
\endgroup

Theorem~\ref{thm-chance-constrained-quasi-linear} can thus be applied to $\ProbJSR(\omega)$.

\begin{corollary}\label{cor-chance-constrained-JSR}
Consider the sampled problem \eqref{eq-optim-jsr}.
Let Assumption~\ref{ass-no-barabanov} hold.
Let $N\in\NNb$, $N\geq d\coloneqq\frac{n(n+1)}2$, and let $\varepsilon\in(0,1)$.
Then,
\begin{equation}\label{eq-chance-constrained-inequality}
\Prob^N(\{\omegab_N\in\Delta^N : \ViolJSR(\omegab_N)>\varepsilon\})\leq\Phi(\varepsilon,d-1,N),
\end{equation}
where $\ViolJSR(\omegab_N) = \Prob(\{\delta\in\Delta : \CostJSR(\omega_N\cup\{\delta\})>\CostJSR(\omega_N)\})$.
\end{corollary}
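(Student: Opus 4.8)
The plan is to exhibit $\ProbJSR(\omega)$ as a concrete instance of the abstract sampled quasi-linear problem \eqref{eq-optim-sampled} and then to invoke Theorem~\ref{thm-chance-constrained-quasi-linear} with $\zeta=d$. First I would set up the dictionary between the two formulations. The decision variable $x$ of \eqref{eq-optim-sampled} is identified with the symmetric matrix $P$, regarded as a point of $\Re^d$ with $d=\frac{n(n+1)}2$ the dimension of the space of symmetric $\nn$ matrices; the scalar $\lambda$ is identified with $\gamma^2$; the secondary cost $c$ is identified with $P\mapsto\lVert P\rVert_F^2$; and the common constraint set is $\calX=\{P:P\succeq I\:\wedge\:\lVert P\rVert_F\leq C\}$. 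For each sample $\delta=(A,x)\in\Delta$ I set $a_\delta^\top P\coloneqq(Ax)^\top P(Ax)$ and $b_\delta^\top P\coloneqq x^\top Px$, both linear in $P$, so that the quasi-linear constraint $(Ax)^\top P(Ax)\leq\gamma^2 x^\top Px$ of \eqref{eq-optim-jsr} becomes exactly $a_\delta^\top P\leq\lambda\, b_\delta^\top P$.

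Next I would verify that every structural hypothesis of \eqref{eq-optim-sampled} holds under this dictionary. The set $\calX$ is convex (the intersection of $\{P\succeq I\}$ with a Frobenius ball), compact, excludes $0$ (since $P\succeq I$ forces $P\neq0$), and has nonempty interior for $C$ large enough, as ensured by the standing hypothesis $C\geq n$; the function $\lVert\cdot\rVert_F^2$ is strongly convex; and for every $P\in\calX$ and $x\in\SSb$ one has $b_\delta^\top P=x^\top Px>0$ because $P\succeq I\succ0$ and $x\neq0$. The one genuinely non-formal point is the reparametrisation $\lambda=\gamma^2$: because $\gamma\mapsto\gamma^2$ is a strictly increasing bijection of $[0,\infty)$ onto itself, lexicographic minimisation of $(\gamma,\lVert P\rVert_F^2)$ and of $(\lambda,\lVert P\rVert_F^2)$ yield the same minimiser (with $\gamma^*=\sqrt{\lambda^*}$) and the same feasible slice at each fixed level. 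Consequently $\OptJSR$ and $\Opt$ agree after this change of variable, the optimal costs $\CostJSR$ and $\Cost$ coincide, and the strict lexicographic inequality defining the violation event is preserved; in particular the functional $\ViolJSR(\omegab_N)$ of the corollary is exactly the functional $V(\omegab_N)$ of Theorem~\ref{thm-chance-constrained-quasi-linear}.

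It then remains to supply the two ingredients that Theorem~\ref{thm-chance-constrained-quasi-linear} consumes. The non-degeneracy hypothesis (Assumption~\ref{ass-nondegenerate}) is delivered by Proposition~\ref{pro-QLP-index-basis}, which holds under Assumption~\ref{ass-no-barabanov}; and a valid uniform bound on essential-set cardinality is furnished by Theorem~\ref{thm-essential-set-cardinality}, which gives $\lvert\beta\rvert\leq d$ for every essential set, so I may take $\zeta=d$. With $N\geq d=\zeta$ and $\varepsilon\in(0,1)$, Theorem~\ref{thm-chance-constrained-quasi-linear} then yields $\Prob^N(\{\omegab_N\in\Delta^N:\ViolJSR(\omegab_N)>\varepsilon\})\leq\Phi(\varepsilon,\zeta-1,N)=\Phi(\varepsilon,d-1,N)$, which is precisely \eqref{eq-chance-constrained-inequality}. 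I expect the only real obstacle to be a careful execution of the $\lambda=\gamma^2$ reduction --- making sure the equivalence of the two lexicographic programs is asserted at the level of \emph{optima, optimal costs, and the induced violation event}, not merely of feasible sets --- together with correctly identifying the ambient dimension as $\frac{n(n+1)}2$ rather than $n^2$.
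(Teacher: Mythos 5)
Your proposal is correct and takes the same route the paper intends: the paper gives no explicit proof, simply asserting after Proposition~\ref{pro-QLP-index-basis} that Theorem~\ref{thm-chance-constrained-quasi-linear} ``can thus be applied'' to $\ProbJSR(\omega)$, and your writeup supplies exactly the verification left implicit --- the identification of $P$ with a point of $\Re^{n(n+1)/2}$, the $\lambda=\gamma^2$ reparametrisation, the structural hypotheses on $\calX$, $c$ and $b_\delta$, non-degeneracy via Proposition~\ref{pro-QLP-index-basis}, and $\zeta=d$ via Theorem~\ref{thm-essential-set-cardinality}.
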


\begin{remark}\label{rem-improvement-chance-constrained-jsr}
We note the improvement of the right-hand side term of \eqref{eq-chance-constrained-inequality}, compared to \citet[Theorem~10]{kenanian2019data}; this term becomes $\Phi(\varepsilon,d-1,N)$ instead of $\Phi(\varepsilon,d,N)$ in \citet{kenanian2019data}.
This is due to the improvement of the bound on the cardinality of essential sets of quasi-linear problems; see Theorem~\ref{thm-essential-set-cardinality}.
\end{remark}

From Corollary~\ref{cor-chance-constrained-JSR}, we deduce the following probabilistic guarantee on the upper bound on the JSR of the switched linear system given by $\calA$, that we can get from the solution of the sampled problem $\ProbJSR(\omega)$.
The derivation of this result follows the same lines as in \citet[Theorems~14 and~15]{kenanian2019data}, so that the details are omitted here.

\begin{corollary}\label{cor-jsr-proba-bound}
Consider the sampled problem \eqref{eq-optim-jsr}.
Let Assumption~\ref{ass-no-barabanov} hold.
Let $N\in\NNb$, $N\geq d\coloneqq\frac{n(n+1)}2$, and let $\varepsilon\in(0,1)$.
Then, for all $\omegab_N\in\Delta^N$, except possibly those $\omegab_N$ in a subset $\Omega\subseteq\Delta^N$ with measure $\Prob^N(\Omega)\leq\Phi(\varepsilon,d-1,N)$, it holds that
\[
\rho(\calA) \leq \gamma^*\Big/\sqrt{1-I^{-1}\Big(\frac{\varepsilon\kappa(P^*)}m;\frac{d-1}2;\frac12\Big)}\:,
\]
where $(\gamma^*,P^*)=\OptJSR(\omega_N)$, $\kappa(P)=\sqrt{\frac{\det(P)}{\lambdamin(P)^n}}$, $I^{-1}$ is the \emph{inversed regularized incomplete beta function}%
\footnote{See, e.g., \citet[Definition~2]{kenanian2019data}.}
and $\rho(\calA)$ is the JSR of the switched linear system defined by $\calA$.
\end{corollary}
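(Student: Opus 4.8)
The plan is to split the statement into a \emph{probabilistic} part, handled by Corollary~\ref{cor-chance-constrained-JSR}, and a \emph{deterministic geometric} part that turns a small violating measure into a quadratic Lyapunov inequality valid for the whole system, to which the classical sufficient condition $\rho(\calA)\leq\bar\gamma$ of \citet[Theorem~2.11]{jungers2009thejoint} applies.

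First I would fix $\varepsilon\in(0,1)$ and $N\geq d$ and apply Corollary~\ref{cor-chance-constrained-JSR}: the set $\Omega\coloneqq\{\omegab_N\in\Delta^N:\ViolJSR(\omegab_N)>\varepsilon\}$ has measure $\Prob^N(\Omega)\leq\Phi(\varepsilon,d-1,N)$. It therefore suffices to establish the displayed JSR bound for each fixed $\omegab_N\notin\Omega$, i.e.\ whenever $\ViolJSR(\omegab_N)\leq\varepsilon$. For such a sample, write $(\gamma^*,P^*)=\OptJSR(\omega_N)$. Using the explicit form of the constraints in \eqref{eq-optim-jsr}, a pair $(A,x)$ strictly increases the cost precisely when $(Ax)^\top P^*(Ax)>(\gamma^*)^2\,x^\top P^*x$ (the degenerate pairs forming a null set by Proposition~\ref{pro-QLP-index-basis}); hence this \emph{violating region} has $\Prob$-measure at most $\varepsilon$.

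The heart of the argument is the deterministic estimate. For each $A\in\calA$ I would change variables by $y=(P^*)^{1/2}x$ and study the Rayleigh quotient of $M_A\coloneqq(P^*)^{-1/2}A^\top P^*A\,(P^*)^{-1/2}$, noting that the contraction holds for all $x$ iff $\lambda_{\max}(M_A)\leq(\gamma^*)^2$, and in any case $\rho(\calA)\leq\max_A\sqrt{\lambda_{\max}(M_A)}$. Lower-bounding the Rayleigh quotient on the spherical cap of half-angle $\theta$ around the top eigenvector of $M_A$ by $\cos^2\theta\cdot\lambda_{\max}(M_A)$ shows that the cap with $\sin^2\theta=1-(\gamma^*)^2/\lambda_{\max}(M_A)$ is contained in the violating region. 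The uniform measure of such a cap is a regularized incomplete beta function of $\sin^2\theta$; transporting it between the $x$- and $y$-spheres introduces the ellipsoidal distortion $\kappa(P^*)=\sqrt{\det(P^*)/\lambdamin(P^*)^n}$, and the split of $\Prob$ over the $m$ matrices of $\calA$ contributes the factor $m$. Combining these with the measure bound $\varepsilon$ and inverting the beta function monotonically yields $\max_A\lambda_{\max}(M_A)\leq(\gamma^*)^2\big/\bigl(1-I^{-1}(\varepsilon\kappa(P^*)/m;(d-1)/2;1/2)\bigr)$, so that $\bar\gamma\coloneqq\gamma^*\big/\sqrt{1-I^{-1}(\varepsilon\kappa(P^*)/m;(d-1)/2;1/2)}$ is a genuine contraction factor and $\rho(\calA)\leq\bar\gamma$, as claimed.

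The main obstacle is exactly this geometric bookkeeping: one must control the \emph{direction} of the distortion so that $\kappa(P^*)$ enters with its worst-case orientation, correctly account for the $1/m$ weight and for the restriction to the hardest matrix $A$, and identify the cap measure with the stated incomplete beta function so that its inverse appears with the right arguments. These are precisely the computations of \citet[Theorems~14 and~15]{kenanian2019data}; since the constraints of \eqref{eq-optim-jsr} have the same form as there, the argument transfers once the violating region is identified, and the only new ingredient is the sharper exponent $d-1$ delivered by Corollary~\ref{cor-chance-constrained-JSR} (cf.\ Remark~\ref{rem-improvement-chance-constrained-jsr}).
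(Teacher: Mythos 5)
Your proposal is correct and takes essentially the same route as the paper: the paper's own ``proof'' consists precisely of invoking Corollary~\ref{cor-chance-constrained-JSR} for the probabilistic part and then deferring the deterministic spherical-cap/ellipsoid-distortion computation to \citet[Theorems~14 and~15]{kenanian2019data}, which is exactly the decomposition you describe (and you supply more of the geometric detail than the paper does). One small point worth double-checking against that reference in your final matching step: the cap-measure computation on the unit sphere of $\Re^n$ naturally yields the beta-function parameter $\tfrac{n-1}{2}$, so the $\tfrac{d-1}{2}$ appearing in the statement (with $d=\tfrac{n(n+1)}{2}$) should be verified against \citet{kenanian2019data} rather than asserted to ``appear with the right arguments''.
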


\section{Numerical experiments: consensus of hidden network}\label{sec-numerical-experiments}

We consider the problem of consensus in a switching and hidden network.
The interaction between the nodes in the network over time can be modeled as a switched linear dynamical system:
\[
x(t+1) = A_{\sigma(t)} x(t),\quad x(t)\in\mathbb{R}^n,\quad A_{\sigma(t)} \in \mathcal{A} := \{A_1,\ldots, A_m\}\subseteq\Re^\nn,
\]
where $x(t)$ is the state vector ($n$ is the number of nodes) at time $t$ and $A_{\sigma(t)}$ is the interaction matrix at time $t$, with $A_i$ being \emph{unknown} row-stochastic matrices, i.e., $A_i\uh = \uh$, $i=1,\ldots,m$, where $\uh$ is the all-one vector in $\Re^n$.
The goal is to verify that $x(t) = A_{\sigma(t-1)} \cdots A_{\sigma(1)} A_{\sigma(0)} x(0)$ converges to $c\uh$ for some $c$ as $t\to\infty$.
As shown by \citet{jadbabaie2003coordination}, this question boils down to the computation of the JSR of $\calA'\coloneqq\{A'_1,\ldots, A'_m\}\subseteq\Re^{n-1\times n-1}$ where $A'_i=BA_i^{}B^\top$, for $i = 1,\ldots,m$, and $B\in\Re^{n-1\times n}$ is a fixed orthogonal matrix ($BB^\top=I_{n-1}$) with kernel spanned by $\uh$.
In our experiment, we consider a network of $8$ nodes, switching among $3$ modes, as shown in Figure \ref{fig-network}.
The possible networks are not known, and only the state of the different agents is available. Hence, we use the data-driven framework in Section~\ref{sec-JSR-black-box} to estimate the JSR of $\calA'$.

\begin{figure}[h]
\newcommand{\factor}{0.25}
\centering
\begin{tabular}{c@{\qquad}c@{\qquad}c}
\includegraphics[width=\factor\textwidth]{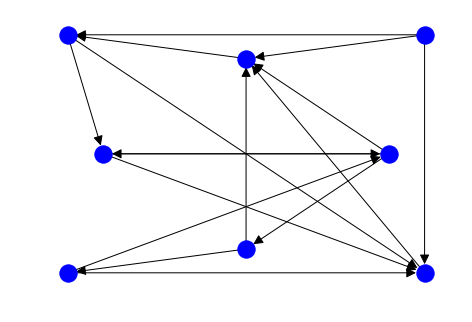} &
\includegraphics[width=\factor\textwidth]{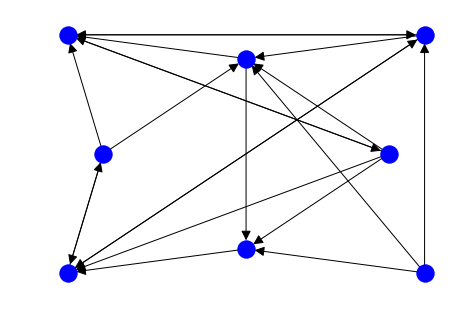} &
\includegraphics[width=\factor\textwidth]{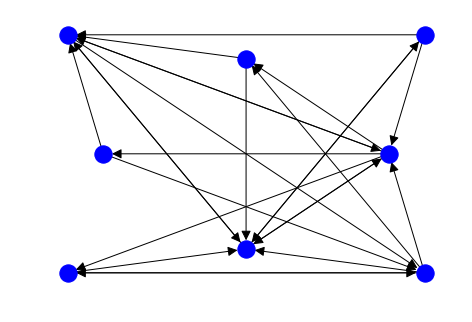}
\end{tabular}
\caption{Example of switching network with $3$ modes.}
\label{fig-network}
\end{figure}

First, we sample a data set of $N$ pairs: $(x_i,y_i)$, with $i=1,\ldots,N$, where $x_i$ is sampled uniformly at random on $\SSb$ and $y_i = A_{\sigma_i} x_i$, with $\sigma_i$ sampled uniformly at random in $\{1,\ldots, m\}$.
This data set is projected onto $\mathbb{R}^{n-1}$ as follows: $(x_i,y_i)\mapsto(x'_i,y'_i)$ where $x'_i=Bx_i$ and $y'_i=By_i$ and $B$ is as above.%
\footnote{The orthogonality of $B$ is important to ensure that $x'_i/\lVert x'_i\rVert$ is distributed uniformly on $\SSb$.} We then solve the problem in Section~\ref{sec-JSR-black-box} with the projected data set.
We fix the confidence level at $\beta=0.05$.
The probabilistic upper bound on the JSR obtained from Corollary~\ref{cor-jsr-proba-bound} is shown in Figure \ref{fig-jsr} for different sizes of the sample set.
For a comparison, the bound of \citet{kenanian2019data} is also given.
While both bounds converge when the number of samples increases, the bound in this paper requires fewer samples to deduce convergence of the system to consensus, with the same confidence level.

\begin{figure}[h]
\centering
\includegraphics[width=0.9\textwidth]{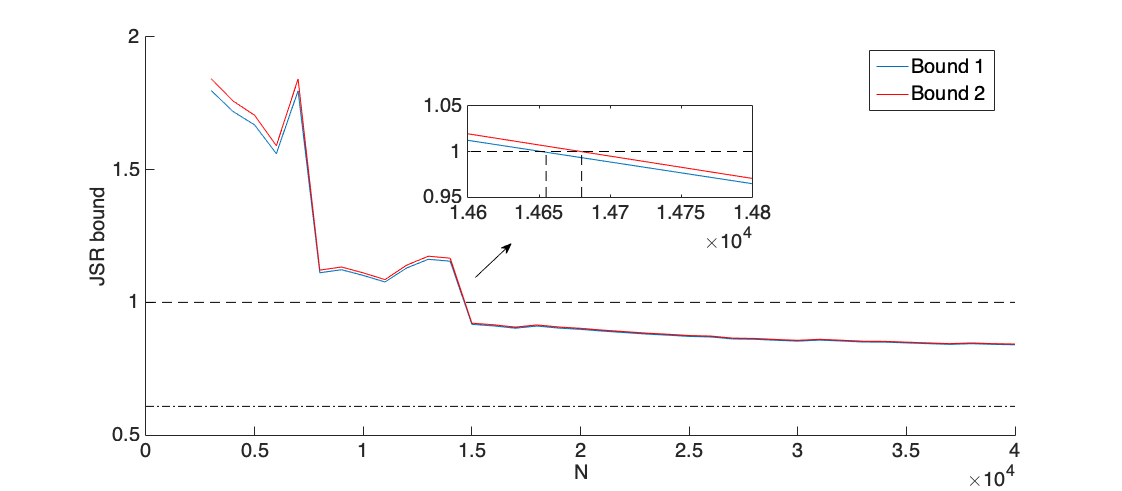}
\caption{Data-driven upper bounds on the JSR for different sizes of the sample set.
Bound 1 refers to the bound of this paper, Bound 2 refers to the bound of \citet{kenanian2019data}, and the dash-dotted line is the bound computed from the white-box model, using the JSR toolbox \citep{jungers2009thejoint}.}
\label{fig-jsr}
\end{figure}

\section{Conclusions}

In this work, we generalized the theory of chance-constrained optimization to quasi-convex problems, and pushed further the effort initiated in \citet{kenanian2019data}, demonstrating its use for data-driven stability analysis of complex systems.
More precisely, we introduced the class of quasi-linear optimization problems, which is a subclass of quasi-convex problems.
We particularized and improved some classical results of quasi-convex programming to this class.
This allowed us to extend the results of chance-constrained convex optimization to quasi-linear optimization problems.
Thriving on this, we provided a proof of concept that quasi-linear problems are useful for data-driven control applications.
In particular, we applied our framework to the problem of JSR approximation of black-box switched linear systems, introduced in \citet{kenanian2019data}.

For future work, we plan to investigate other applications of chance-constrained quasi-linear optimization for data-driven control.
For instance, we believe that by replacing the conic constraints with their sampled counterpart, one could transform many optimization problems in control theory into quasi-linear programs, and then use chance-constrained optimization to bridge the gap between the original and the sampled formulations.
We also plan to investigate the possibility of relaxing or removing the assumption that there are no Barabanov matrices involved in the switched linear system.
Finally, we plan to provide other approaches for the data-driven stability analysis of switched linear systems based on chance-constrained quasi-linear optimization (e.g., thriving on sum-of-square optimization or path-complete Lyapunov frameworks).

\appendix

\section{Proof of Proposition~\ref{pro-Barabanov}}\label{sec-proof-pro-Barabanov}

First, we prove the \emph{if} direction:
Assume that $A$ is diagonalizable and all its eigenvalues have the same modulus.
Then, there is $T\in\Re^\nn$ invertible such that $A=T^{-1}DT$, where $D\in\Re^\nn$ is block-diagonal with diagonal blocks of size $1$ or $2$, corresponding to eigenvalues with the same modulus.
Denote this common modulus by $\gamma$.
Now, let $P=T^\top T$, which is positive definite.
We verify that $A^\top P A = T^\top D^\top DT = \gamma^2 T^\top T=\gamma^2P$.
Hence, $A$ is Barabanov.

Now, we show the \emph{only if} direction:
Assume that $A^\top PA=\gamma^2P$ for some $P\succ0$ and $\gamma\geq0$.
Let $P=L^\top L$ be a Cholesky factorization of $P$.
It follows that $B^\top B=\gamma^2 I$, where $B=LAL^{-1}$.
If $\gamma=0$, this implies that $B=0$ and thus $A=0$, proving the \emph{only if} direction when $\gamma=0$.
If $\gamma>0$, this implies that $B/\gamma$ is a unitary matrix.
It follows that $B$ is diagonalizable and all its eigenvalues have modulus $\gamma$.
Now, since $A$ is similar to $B$, the same holds for $A$, proving the \emph{only if} direction when $\gamma>0$.\jmlrQED

\bibliography{myrefs}

\begin{thebibliography}{27}
\providecommand{\natexlab}[1]{#1}
\providecommand{\url}[1]{\texttt{#1}}
\expandafter\ifx\csname urlstyle\endcsname\relax
  \providecommand{\doi}[1]{doi: #1}\else
  \providecommand{\doi}{doi: \begingroup \urlstyle{rm}\Url}\fi

\bibitem[Alur et~al.(2009)Alur, D'Innocenzo, Johansson, Pappas, and
  Weiss]{alur2009modeling}
Rajeev Alur, Alessandro D'Innocenzo, Karl~H Johansson, George~J Pappas, and
  Gera Weiss.
\newblock Modeling and analysis of multi-hop control networks.
\newblock In \emph{2009 15th IEEE Real-Time and Embedded Technology and
  Applications Symposium}, pages 223--232. IEEE, 2009.
\newblock \doi{10.1109/RTAS.2009.40}.

\bibitem[Aswani et~al.(2012)Aswani, Master, Taneja, Smith, Krioukov, Culler,
  and Tomlin]{aswani2012identifying}
Anil Aswani, Neal Master, Jay Taneja, Virginia Smith, Andrew Krioukov, David
  Culler, and Claire~J Tomlin.
\newblock Identifying models of {HVAC} systems using semiparametric regression.
\newblock In \emph{2012 American Control Conference (ACC)}, pages 3675--3680.
  IEEE, 2012.
\newblock \doi{10.1109/ACC.2012.6315566}.

\bibitem[Balkan et~al.(2017)Balkan, Tabuada, Deshmukh, Jin, and
  Kapinski]{balkan2017underminer}
Ayca Balkan, Paulo Tabuada, Jyotirmoy~V Deshmukh, Xiaoqing Jin, and James
  Kapinski.
\newblock {Underminer}: a framework for automatically identifying nonconverging
  behaviors in black-box system models.
\newblock \emph{ACM Transactions on Embedded Computing Systems (TECS)},
  17\penalty0 (1):\penalty0 1--28, 2017.
\newblock \doi{10.1145/3122787}.

\bibitem[Berger and Jungers(2020)]{berger2020worstcase}
Guillaume~O Berger and Rapha{\"e}l~M Jungers.
\newblock Worst-case topological entropy and minimal data rate for state
  observation of switched linear systems.
\newblock In \emph{Proceedings of the 23rd International Conference on Hybrid
  Systems: Computation and Control}, pages 1--11. ACM, 2020.
\newblock \doi{10.1145/3365365.3382195}.

\bibitem[Blanchini et~al.(2016)Blanchini, Fenu, Giordano, and
  Pellegrino]{blanchini2016modelfree}
Franco Blanchini, Gianfranco Fenu, Giulia Giordano, and Felice~Andrea
  Pellegrino.
\newblock Model-free plant tuning.
\newblock \emph{IEEE Transactions on Automatic Control}, 6\penalty0
  (62):\penalty0 2623--2634, 2016.
\newblock \doi{10.1109/TAC.2016.2616025}.

\bibitem[Blondel and Nesterov(2005)]{blondel2005computationally}
Vincent~D Blondel and Yurii Nesterov.
\newblock Computationally efficient approximations of the joint spectral
  radius.
\newblock \emph{SIAM Journal on Matrix Analysis and Applications}, 27\penalty0
  (1):\penalty0 256--272, 2005.
\newblock \doi{10.1137/040607009}.

\bibitem[Boczar et~al.(2018)Boczar, Matni, and Recht]{boczar2018finitedata}
Ross Boczar, Nikolai Matni, and Benjamin Recht.
\newblock Finite-data performance guarantees for the output-feedback control of
  an unknown system.
\newblock In \emph{2018 IEEE Conference on Decision and Control (CDC)}, pages
  2994--2999. IEEE, 2018.
\newblock \doi{10.1109/CDC.2018.8618658}.

\bibitem[Boyd et~al.(1994)Boyd, El~Ghaoui, Feron, and
  Balakrishnan]{boyd1994linear}
Stephen Boyd, Laurent El~Ghaoui, Eric Feron, and Venkataramanan Balakrishnan.
\newblock \emph{Linear matrix inequalities in system and control theory},
  volume~15 of \emph{Studies in Applied and Numerical Mathematics}.
\newblock SIAM, 1994.
\newblock \doi{10.1137/1.9781611970777}.

\bibitem[Calafiore(2010)]{calafiore2010random}
Giuseppe~C Calafiore.
\newblock Random convex programs.
\newblock \emph{SIAM Journal on Optimization}, 20\penalty0 (6):\penalty0
  3427--3464, 2010.
\newblock \doi{10.1137/090773490}.

\bibitem[Calafiore and Campi(2006)]{calafiore2006thescenario}
Giuseppe~C Calafiore and Marco~C Campi.
\newblock The scenario approach to robust control design.
\newblock \emph{IEEE Transactions on Automatic Control}, 51\penalty0
  (5):\penalty0 742--753, 2006.
\newblock \doi{10.1109/TAC.2006.875041}.

\bibitem[Campi et~al.(2018)Campi, Garatti, and Ramponi]{campi2018ageneral}
Marco~C Campi, Simone Garatti, and Federico~A Ramponi.
\newblock A general scenario theory for nonconvex optimization and decision
  making.
\newblock \emph{IEEE Transactions on Automatic Control}, 63\penalty0
  (12):\penalty0 4067--4078, 2018.
\newblock \doi{10.1109/TAC.2018.2808446}.

\bibitem[Coulson et~al.(2020)Coulson, Lygeros, and
  D{\"o}rfler]{coulson2020distributionally}
Jeremy Coulson, John Lygeros, and Florian D{\"o}rfler.
\newblock Distributionally robust chance constrained data-enabled predictive
  control.
\newblock \emph{arXiv:2006.01702}, 2020.

\bibitem[Duggirala et~al.(2013)Duggirala, Mitra, and
  Viswanathan]{duggirala2013verification}
Parasara~Sridhar Duggirala, Sayan Mitra, and Mahesh Viswanathan.
\newblock Verification of annotated models from executions.
\newblock In \emph{2013 Proceedings of the International Conference on Embedded
  Software (EMSOFT)}, pages 1--10. IEEE, 2013.
\newblock \doi{10.1109/EMSOFT.2013.6658604}.

\bibitem[Eppstein(2005)]{eppstein2005quasiconvex}
David Eppstein.
\newblock Quasiconvex programming.
\newblock \emph{Combinatorial and Computational Geometry}, 52:\penalty0
  287--331, 2005.

\bibitem[Huang and Mitra(2014)]{huang2014proofs}
Zhenqi Huang and Sayan Mitra.
\newblock Proofs from simulations and modular annotations.
\newblock In \emph{Proceedings of the 17th International Conference on Hybrid
  Systems: Computation and Control}, pages 183--192. ACM, 2014.
\newblock \doi{10.1145/2562059.2562126}.

\bibitem[Jadbabaie et~al.(2003)Jadbabaie, Lin, and
  Morse]{jadbabaie2003coordination}
Ali Jadbabaie, Jie Lin, and A~Stephen Morse.
\newblock Coordination of groups of mobile autonomous agents using nearest
  neighbor rules.
\newblock \emph{IEEE Transactions on Automatic Control}, 48\penalty0
  (6):\penalty0 988--1001, 2003.
\newblock \doi{10.1109/TAC.2003.812781}.

\bibitem[Jungers(2009)]{jungers2009thejoint}
Rapha{\"e}l~M Jungers.
\newblock \emph{The joint spectral radius: theory and applications}, volume 385
  of \emph{Lecture Notes in Control and Information Sciences}.
\newblock Springer-Verlag Berlin Heidelberg, 2009.
\newblock \doi{10.1007/978-3-540-95980-9}.

\bibitem[Jungers et~al.(2017)Jungers, Ahmadi, Parrilo, and
  Roozbehani]{jungers2017acharacterization}
Rapha{\"e}l~M Jungers, Amir~Ali Ahmadi, Pablo~A Parrilo, and Mardavij
  Roozbehani.
\newblock A characterization of {Lyapunov} inequalities for stability of
  switched systems.
\newblock \emph{IEEE Transactions on Automatic Control}, 62\penalty0
  (6):\penalty0 3062--3067, 2017.
\newblock \doi{10.1109/TAC.2017.2671345}.

\bibitem[Kenanian et~al.(2019)Kenanian, Balkan, Jungers, and
  Tabuada]{kenanian2019data}
Joris Kenanian, Ayca Balkan, Rapha{\"e}l~M Jungers, and Paulo Tabuada.
\newblock Data driven stability analysis of black-box switched linear systems.
\newblock \emph{Automatica}, 109:\penalty0 108533, 2019.
\newblock \doi{10.1016/j.automatica.2019.108533}.

\bibitem[Kozarev et~al.(2016)Kozarev, Quindlen, How, and
  Topcu]{kozarev2016case}
Alexandar Kozarev, John Quindlen, Jonathan How, and Ufuk Topcu.
\newblock Case studies in data-driven verification of dynamical systems.
\newblock In \emph{Proceedings of the 19th International Conference on Hybrid
  Systems: Computation and Control}, pages 81--86. ACM, 2016.
\newblock \doi{10.1145/2883817.2883846}.

\bibitem[Margellos et~al.(2014)Margellos, Goulart, and
  Lygeros]{margellos2014ontheroad}
Kostas Margellos, Paul~J Goulart, and John Lygeros.
\newblock On the road between robust optimization and the scenario approach for
  chance constrained optimization problems.
\newblock \emph{IEEE Transactions on Automatic Control}, 59\penalty0
  (8):\penalty0 2258--2263, 2014.
\newblock \doi{10.1109/TAC.2014.2303232}.

\bibitem[Parrilo and Jadbabaie(2008)]{parrilo2008approximation}
Pablo~A Parrilo and Ali Jadbabaie.
\newblock Approximation of the joint spectral radius using sum of squares.
\newblock \emph{Linear Algebra and Its Applications}, 428\penalty0
  (10):\penalty0 2385--2402, 2008.
\newblock \doi{10.1016/j.laa.2007.12.027}.

\bibitem[Rockafellar(1970)]{rockafellar1970convex}
R~Tyrrell Rockafellar.
\newblock \emph{Convex analysis}.
\newblock Princeton University Press, 1970.

\bibitem[Teschl()]{teschltopics}
Gerald Teschl.
\newblock \emph{Topics in real analysis}.
\newblock Graduate studies in mathematics. American Mathematical Society.
\newblock \url{https://www.mat.univie.ac.at/~gerald/ftp/book-ra/ra.pdf}.

\bibitem[Van~Parys et~al.(2015)Van~Parys, Kuhn, Goulart, and
  Morari]{van2015distributionally}
Bart~PG Van~Parys, Daniel Kuhn, Paul~J Goulart, and Manfred Morari.
\newblock Distributionally robust control of constrained stochastic systems.
\newblock \emph{IEEE Transactions on Automatic Control}, 61\penalty0
  (2):\penalty0 430--442, 2015.
\newblock \doi{10.1109/TAC.2015.2444134}.

\bibitem[Wang and Jungers(2020)]{wang2020adatadriven}
Zheming Wang and Rapha{\"e}l~M Jungers.
\newblock A data-driven method for computing polyhedral invariant sets of
  black-box switched linear systems.
\newblock \emph{arXiv:2009.10984}, 2020.

\bibitem[Zhou et~al.(2017)Zhou, Hu, and Tomlin]{zhou2017quantitative}
Datong~P Zhou, Qie Hu, and Claire~J Tomlin.
\newblock Quantitative comparison of data-driven and physics-based models for
  commercial building {HVAC} systems.
\newblock In \emph{2017 American Control Conference (ACC)}, pages 2900--2906.
  IEEE, 2017.
\newblock \doi{10.23919/ACC.2017.7963391}.

\end{thebibliography}

\end{document}